\documentclass[10pt]{amsart}
\usepackage{amssymb,amstext,amsmath,amscd,amsthm,amsfonts,enumerate,latexsym,stmaryrd,multicol,geometry,graphicx,mathrsfs,bm}
\usepackage[usenames]{color}
\usepackage[all]{xy}
\newtheorem{thm}{Theorem}[section]
\newtheorem{lem}[thm]{Lemma}
\newtheorem{prop}[thm]{Proposition}
\newtheorem{cor}[thm]{Corollary}
\theoremstyle{definition}
\newtheorem{dfn}[thm]{Definition}

\newtheorem{rem}[thm]{Remark}

\newtheorem{ex}[thm]{Example}

\theoremstyle{remark}

\newtheorem*{claim*}{Claim}
\newtheorem*{ac}{Acknowledgments}
\newtheorem*{conv}{Convention}

\numberwithin{equation}{thm}

\def\add{\operatorname{\mathsf{add}}}

\def\c{\operatorname{\mathsf{C}}}
\def\C{\mathcal{C}}

\def\cone{\operatorname{cone}}
\def\D{\mathrm{D}}

\def\db{\operatorname{\mathsf{D^b}}}
\def\depth{\operatorname{depth}}
\def\dim{\operatorname{dim}}

\def\gt{\operatorname{gt}}

\def\ge{\geqslant}

\def\hom{\operatorname{Hom}}

\def\K{\mathrm{K}}

\def\le{\leqslant}
\def\level{\operatorname{level}}
\def\Lotimes{\otimes^{\mathbf{L}}}
\def\m{\mathfrak{m}}
\def\max{\operatorname{max}}

\def\mod{\operatorname{\mathsf{mod}}}
\def\Mod{\operatorname{\mathsf{Mod}}}
\def\n{\mathfrak{n}}

\def\p{\mathfrak{p}}
\def\perf{\operatorname{\mathsf{perf}}}
\def\proj{\operatorname{\mathsf{proj}}}

\def\soc{\operatorname{soc}}
\def\spec{\operatorname{Spec}}
\def\sup{\operatorname{sup}}

\def\T{\mathcal{T}}

\def\Tor{\operatorname{Tor}}

\def\X{\mathcal{X}}
\def\Y{\mathcal{Y}}

\begin{document}
\title{A lower bound for the Rouquier dimension of derived categories over commutative rings} 
\author{Yuki Mifune}
\address{Graduate School of Mathematics, Nagoya University, Furocho, Chikusaku, Nagoya 464-8602, Japan}
\email{yuki.mifune.c9@math.nagoya-u.ac.jp}
\thanks{2020 {\em Mathematics Subject Classification.} 13D09, 18G80}
\thanks{{\em Key words and phrases.} coghost map, derived category, Krull dimension, Rouquier dimension.}
\thanks{The author was partly supported by Grant-in-Aid for JSPS Fellows 25KJ1386.}
\begin{abstract}
We prove that the Rouquier dimension of the bounded derived category of finitely generated modules over a commutative noetherian ring is bounded below by the Krull dimension of the ring.
\end{abstract}
\maketitle
\section{Introduction}
The Rouquier dimension of a triangulated category, introduced by Bondal, Rouquier, and Van den Bergh \cite{BV,Rou}, is an invariant that measures the number of mapping cones needed to build the whole category out of a single object, up to finite direct sums, direct summands, and shifts; see Definition \ref{def_dim} for the precise definition.

Let $R$ be a commutative noetherian ring.
We denote by $\mod R$ the category of finitely generated $R$-modules, and by $\db(R)=\db(\mod R)$ the bounded derived category of $\mod R$.
In this paper, we investigate lower bounds for the Rouquier dimension of $\db(R)$.
In other words, our aim is to estimate the Rouquier dimension of $\db(R)$ in terms of classical invariants from commutative algebra. 
Such lower bounds often play an essential role in determining the exact value of $\dim \db(R)$, especially when they are combined with known upper bounds.
The following theorem summarizes several known results in this direction.
\begin{thm}[Rouquier, Aihara--Takahashi, Dey--Lank--Takahashi, Letz]\label{prev}
Let $R$ be a commutative noetherian ring. Then one has
\[
\dim R \le \dim \db(R), \qquad
\dim R-1 \le \dim \db(R), \qquad
\depth R \le \dim \db(R)
\]
when $R$ is, respectively, a reduced finitely generated algebra over a field, a reduced ring, and a local ring.
\end{thm}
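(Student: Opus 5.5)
Theorem \ref{prev} collects three known results, so I would prove it case by case, always with the same tool: a \emph{ghost lemma} argument. If $G$ is a generator with $\db(R)=\langle G\rangle_{n+1}$ and $f_1,\dots,f_{n+1}$ are composable maps each killing $\hom(G[i],-)$ (or, dually, coghost maps killing $\hom(-,G[i])$), then the composite $f_{n+1}\cdots f_1$ is zero. So to show $\dim\db(R)\ge d$ it suffices to build, for an arbitrary generator $G$, a chain of $d$ composable (co)ghost maps whose composite is nonzero. Since the statement allows localization, I first reduce to a local situation. For a prime $\mathfrak p$, localization $\db(R)\to\db(R_{\mathfrak p})$ is essentially surjective up to summands and preserves levels, so $\dim\db(R_{\mathfrak p})\le\dim\db(R)$. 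For the first two bounds I choose $\mathfrak p$ with $\dim R_{\mathfrak p}$ maximal, or close to maximal; for a finitely generated algebra over a field this is where Rouquier works with a maximal ideal of maximal height.

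\textbf{Depth bound (local case).} Let $(R,\m,k)$ be local, $t=\depth R$, and let $x_1,\dots,x_t$ be a maximal regular sequence. Set $R_i=R/(x_1,\dots,x_i)$. Each short exact sequence $0\to R_{i-1}\xrightarrow{x_i}R_{i-1}\to R_i\to0$ gives a connecting map $R_i\to R_{i-1}[1]$. Composing these yields a map $k\to R_t\to R[t]$, using that $\m$ is associated to $R_t$. This composite represents a nonzero class in $\mathrm{Ext}^t(k,R)$, which is nonzero exactly because $\depth R=t$. To turn the factors into ghosts for an arbitrary $G$, I replace $x_i$ by a high power $x_i^{N}$, with $N$ chosen so that $x_i^N$ annihilates the relevant finitely many cohomology and Ext groups involving $G$. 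The composite stays nonzero because Ext into $R$ behaves well along regular sequences, via the Koszul complex and the Rees lemma. This gives $t$ ghosts with nonzero composite, hence $\dim\db(R)\ge t$.

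\textbf{Reduced cases.} If $R$ is reduced, $R_{\mathfrak p}$ is reduced, and reducedness should let me make the chain of maps have the full length $\dim R_{\mathfrak p}$, or length $\dim-1$ in general. For a reduced finitely generated algebra over a field, the regular locus is open and dense. So I pick a maximal ideal $\mathfrak p$ of maximal height at which $R_{\mathfrak p}$ is regular; then $\depth R_{\mathfrak p}=\dim R_{\mathfrak p}=\dim R$, and the depth bound gives $\dim R\le\dim\db(R)$. For a general reduced ring regular points of maximal height need not exist. Instead I would use that a reduced local ring of dimension $d$ has depth at least $1$, and pass to a normalization-like or generic-regularity argument that loses at most one unit of dimension, giving $\dim R-1$.

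\textbf{Main obstacle.} The hard part is the step that makes each connecting map genuinely ghost with respect to the arbitrary generator $G$, while keeping the composite nonzero. Replacing $x_i$ by $x_i^N$ has to be compatible with the nonvanishing of the top Ext class. In the general reduced case, I also need to control the loss of exactly one dimension, which I would attempt through Letz's coghost framework.
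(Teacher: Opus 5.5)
The paper does not reprove Theorem \ref{prev}. It quotes the three bounds, and then recovers all of them at once from Corollary \ref{dim}, since $\dim R-1\le\dim R$ and $\depth R\le\dim R$. Measured against that, your sketch has two genuine gaps.

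\textbf{The local depth argument fails as written.} Take the connecting map $\partial\colon R_i\to R_{i-1}[1]$ for multiplication by $x_i^{N}$. Exactness shows that the image of $\hom^*_{\db(R)}(G,\partial)$ is (a shift of) $(0:_H x_i^{N})$, where $H=\hom^*_{\db(R)}(G,R_{i-1})$. Choosing $N$ with $x_i^{N}H=0$ makes this image all of $H$, which is the opposite of ghost. The working device is Letz's lemma (dualized here as Lemma \ref{lem_coghost}): compose the map attached to $x_i^{n+1}$ with multiplication by $x_i^{n}$, where $x_i^{n}\Gamma_{x_i}H=0$. Two problems follow that you have not addressed.
\begin{enumerate}
\item A uniform $n$ is guaranteed when $H$ is finitely generated. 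In your ghost direction, however, $H=\bigoplus_j\mathrm{Ext}^j_R(G,R_{i-1})$ need not be finitely generated, since these Ext modules can be nonzero for infinitely many $j$. So the ``finitely many Ext groups'' you invoke are not available. The paper instead uses coghost maps out of $E_0=R$ into $E_i=\cone(x_i^{n_i+1}\cdot 1_{E_{i-1}})$; there $\hom^*_{\db(R)}(E_{i-1},G)$ is finitely generated because $E_{i-1}$ is perfect.
\item Your witness $k\to R_t\to R[t]$ is killed by the inserted powers. By $R$-linearity the composite equals $(\partial\circ g)\circ(x_1^{n_1}\cdots x_t^{n_t}\cdot 1_k)=0$. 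So $\mathrm{Ext}^t_R(k,R)\neq0$ is the wrong invariant. What is needed is that $\alpha=x_1^{n_1}\cdots x_t^{n_t}\cdot e\colon R\to\K(x_1^{n_1+1},\ldots,x_t^{n_t+1})$ is nonzero, with $e$ the canonical map. This already holds on $H^0$, because $x_1^{n_1}\cdots x_t^{n_t}\notin(x_1^{n_1+1},\ldots,x_t^{n_t+1})$ for an $R$-regular sequence (Lemma \ref{lem_alg} with $B=R$).
\end{enumerate}

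\textbf{For a general reduced ring you give no argument.} Reducedness alone yields only $\depth R_\p\ge 1$, which is far from $\dim R-1$. The regular locus of a non-excellent ring need not be open, so the generic-regularity step you used for finitely generated algebras does not transfer. The ``normalization-like argument losing one unit'' is never specified. The idea that makes all of this unnecessary is the content of Theorem \ref{mthm}. Run the coghost construction on an arbitrary system of parameters $x_1,\ldots,x_d$ of $R_\p$, which need not be a regular sequence. Then detect $\alpha\neq0$ after applying $-\Lotimes_{R_\p}B$, where $B$ is a balanced big Cohen--Macaulay $R_\p$-algebra (Andr\'e). Over $B$ the parameters form a regular sequence, so Lemma \ref{lem_alg} applies. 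Combined with $\dim\db(R_\p)\le\dim\db(R)$, this gives $\dim R\le\dim\db(R)$ for every noetherian ring, with no reducedness hypothesis. Your reduction in the finitely generated case, to a regular closed point of maximal height, is sound once the local argument is repaired. It is then superseded by this general bound.
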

Let us explain this theorem.
Rouquier first showed that if $R$ is a reduced finitely generated algebra over a field, then $\dim R$ gives a lower bound for $\dim\db(R)$; see \cite[Proposition 7.16]{Rou}.
In particular, for regular finitely generated algebras over a field, this lower bound together with the standard upper bound coming from finite global dimension gives the equality $\dim R=\dim\db(R)$.
Aihara--Takahashi later refined Rouquier's result and extended this type of lower bound beyond the finitely generated case. 
In particular, they proved that if $R$ is a reduced noetherian ring of finite Krull dimension, then $\dim R-1$ gives a lower bound for $\dim\db(R)$; see \cite[Theorem 6.2 and Corollary 6.6]{AT2015}. Furthermore, Dey--Lank--Takahashi removed the assumption that $R$ has finite Krull dimension from this statement; see \cite[Lemma 2.5]{DLT}.
More recently, Kekkou--Letz--Stephan \cite{KLS} systematically developed the theory of regular sequences in $R$-linear triangulated categories. 
In this context, Letz \cite{L} proved that the length of a regular sequence gives a lower bound for the generation time in an $R$-linear triangulated category. 
Note that the third assertion in Theorem \ref{prev} follows by applying \cite[Proposition 2.5]{L} to the opposite category of $\db(R)$ with $M=R$. 
In particular, $\dim R$ gives a lower bound for $\dim\db(R)$ in the Cohen--Macaulay case.

The main result of this paper is the following:
\begin{thm}[Corollary \ref{dim}]\label{mthm_int}
For an arbitrary commutative noetherian ring $R$, one has
\[
\dim R \le \dim\db(R).
\]
\end{thm}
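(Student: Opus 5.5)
The plan is to reduce to the local case and then use a converse coghost lemma. In the local case the obstruction comes from Koszul complexes on a system of parameters together with Grothendieck's nonvanishing theorem $H^d_\m(R)\neq0$.

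\emph{Step 1 (localization).} For a prime $\p$, I claim $\dim\db(R_\p)\le\dim\db(R)$. Every object of $\db(R_\p)$ is isomorphic to $X_\p$ for some $X\in\db(R)$, since one can clear denominators in a bounded complex of finitely generated modules. Localization is exact and commutes with cones, sums, summands and shifts. Hence if $G$ generates $\db(R)$ in $n+1$ steps, then $G_\p$ generates $\db(R_\p)$ in $n+1$ steps. Since $\dim R=\sup_\p\dim R_\p$, it suffices to show $\dim\db(R)\ge d$ for $(R,\m)$ local of dimension $d<\infty$. This also covers the case $\dim R=\infty$: choose primes of arbitrarily large height.

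\emph{Step 2 (coghost lemma).} Fix a generator $G$ with $\db(R)=\langle G\rangle_{n+1}$. Recall the coghost lemma: call $f\colon X\to Y$ $G$-coghost if $\hom(f,G[i])=0$ for all $i$. If $Y\in\langle G\rangle_{k}$, then any composite of $k$ coghost maps ending at $Y$ is zero. So it suffices to produce $d$ composable $G$-coghost maps in $\db(R)$ whose composite is nonzero. Let $x_1,\dots,x_d$ be a system of parameters. For $\mathbf a=(a_1,\dots,a_d)$, write $K(\mathbf a)=K(x_1^{a_1},\dots,x_d^{a_d})$ for the Koszul complex.

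\emph{Step 3 (the coghost maps).} Each $K(\mathbf a)$ has finite-length cohomology, so $\hom_{\db(R)}(K(\mathbf a),G[i])$ has finite length. Moreover $\hom(K(\mathbf a),G[i])\cong H^{i}(K(\mathbf a)^{*}\otimes G)$, which vanishes for all but finitely many $i$, and each of these groups is annihilated by $(x_1^{a_1},\dots,x_d^{a_d})$. Hence there is an $s$ with $x_j^s$ killing all $\hom(K(\mathbf a),G[i])$ for every $j$ and $i$.

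Consider the standard transition map $\varphi_j\colon K(\dots,a_j,\dots)\to K(\dots,a_j+s,\dots)$. It is the identity on the other tensor factors, and on the $j$-th factor it is the chain map $(R\xrightarrow{x_j^{a_j}}R)\to(R\xrightarrow{x_j^{a_j+s}}R)$ given by $1$ in one degree and $x_j^s$ in the other.

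I expect the main technical point to be showing that $\hom(\varphi_j,G[i])=0$. It is not literally multiplication by $x_j^s$. My first approach would be to factor $\varphi_j$ as
\[
K(\mathbf a)\xrightarrow{\;x_j^s\;}K(\mathbf a)\to K(\mathbf a+s e_j),
\]
possibly up to a homotopy, or else to apply the corresponding factorization on the target side. If that fails, I would first pass to large powers: replace $s$ by a multiple and precompose with a further transition map, using that $\hom(-,G[i])$ of such a map factors through multiplication by $x_j^{s}$ on a finite-length module. I expect some such manipulation with the exact triangles $K(\mathbf a)\to K(\mathbf a+se_j)\to K(\mathbf a+se_j)/K(\mathbf a)$ to give vanishing after enlarging $s$.

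Composing one such map for each $j=1,\dots,d$ gives $d$ composable coghost maps $\Phi\colon K(\mathbf a)\to K(\mathbf a+s\mathbf 1)$.

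\emph{Step 4 (nonvanishing).} On $H^0$, in cohomological indexing with $R/(x^{\mathbf a})$ in top degree, $\Phi$ induces
\[
R/(x_1^{a_1},\dots,x_d^{a_d})\xrightarrow{\;(x_1\cdots x_d)^{s}\;}R/(x_1^{a_1+s},\dots,x_d^{a_d+s}).
\]
These are the transition maps of the direct system computing $H^d_\m(R)=\varinjlim_t R/(x_1^t,\dots,x_d^t)$. Since $\dim R=d$, Grothendieck's nonvanishing gives $H^d_\m(R)\neq0$. So some class in $R/(x^{\mathbf a})$, for suitable $\mathbf a=(t,\dots,t)$, survives to the colimit, and hence its image under every transition map is nonzero. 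Fixing this $\mathbf a$ first and then choosing $s$ as in Step 3, $H^0(\Phi)\ne0$, so $\Phi\neq0$.

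By Step 2 this forces $K(\mathbf a)\notin\langle G\rangle_d$ (applying the lemma to the dual or opposite statement as needed). Hence $n+1>d$, that is, $\dim\db(R)\ge d$.
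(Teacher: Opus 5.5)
Your reduction to the local case and your use of the coghost lemma match the paper. The gap is Step 3, and it is not merely unproved: it is false. The transition maps $\varphi_j$ are never $G$-coghost, for any $s$.

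To see this, write $K(\mathbf a)=\cone(x_j^{a_j}\cdot 1_{K'})$, where $K'$ is the Koszul complex on the other $d-1$ powers. Then $\varphi_j$ is the map of cones induced by the vertical maps $1_{K'}$ and $x_j^{s}\cdot 1_{K'}$. So it is part of a morphism of triangles that is the identity on the third terms: $\partial'\circ\varphi_j=\partial$, where $\partial\colon K(\mathbf a)\to K'[1]$ and $\partial'\colon K(\mathbf a+se_j)\to K'[1]$ are the connecting maps. Hence the image of $\hom^*(\varphi_j,G)$ contains the image of $\hom^*(\partial,G)$. Up to shift, the latter is $\hom^*(K',G)/x_j^{a_j}\hom^*(K',G)$. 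This is nonzero by Nakayama, because $\hom^*(K',G)$ is finitely generated and nonzero ($K'\neq0$ and $G$ generates).

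Concretely, take $R=k[\![x]\!]$ and $G=R$. Then $\varphi$ is the injection $R/(x^a)\xrightarrow{x^s}R/(x^{a+s})$, and $\operatorname{Ext}^1_R(\varphi,R)$ is the natural surjection $R/(x^{a+s})\to R/(x^a)$, which is nonzero.

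Your suggested repairs cannot work either. A factorization of $\varphi_j$ through $x_j^s\cdot 1_{K(\mathbf a)}$ with $s\ge a_j$ would force $\varphi_j=0$, since $x_j^{a_j}\cdot 1_{K(\mathbf a)}$ is null-homotopic. And composites of transition maps are again transition maps, so enlarging $s$ changes nothing.

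The missing idea is Letz's construction, which adjoins a new Koszul factor at each step instead of raising an exponent. Starting from $E_0=R$, the paper sets $E_i=\cone(x_i^{n_i+1}\cdot 1_{E_{i-1}})$ and uses $\delta_i\circ(x_i^{n_i}\cdot 1_{E_{i-1}})$, with $n_i$ chosen so that $x_i^{n_i}\Gamma_{x_i}\hom^*(E_{i-1},G)=0$. This map is coghost because the image of $\hom^*(\delta_i,G)$ consists of the elements of $\hom^*(E_{i-1},G)$ killed by $x_i^{n_i+1}$. The composite $R\to\K(x_1^{n_1+1},\ldots,x_d^{n_d+1})$ sends $1$ to $x_1^{n_1}\cdots x_d^{n_d}$ on $H^0$. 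Its nonvanishing therefore requires $x_1^{n_1}\cdots x_d^{n_d}\notin(x_1^{n_1+1},\ldots,x_d^{n_d+1})$. This is the monomial property: the specific class $[1/(x_1\cdots x_d)]$ in $H^d_\m(R)$ is nonzero. Grothendieck's theorem only says that some class is nonzero, which is why the paper uses a balanced big Cohen--Macaulay algebra.

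Your Step 4 idea can still be grafted on. Pick $t$ and $c$ with $\xi=[c/(x_1\cdots x_d)^t]\neq0$ in $H^d_\m(R)$. Run the construction with the system of parameters $y_i=x_i^t$, and precompose with $c\cdot 1_R$; precomposition preserves coghostness. On $H^0$ the composite sends $1$ to $c\,y_1^{n_1}\cdots y_d^{n_d}$. This cannot lie in $(y_1^{n_1+1},\ldots,y_d^{n_d+1})$, since otherwise $\xi=[c/(y_1\cdots y_d)]$ would vanish. With the correct coghost maps, your nonvanishing argument would even avoid big Cohen--Macaulay algebras. With the transition maps as proposed, the proof does not go through.
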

This gives a common generalization of the results collected in Theorem \ref{prev}.
Moreover, it shows that the finiteness of $\dim\db(R)$ forces the finiteness of $\dim R$.
The proof is based on the method used in Letz's work on lower bounds arising from regular sequences \cite{L}, together with the existence of balanced big Cohen--Macaulay algebras \cite{Andre2018}.

The organization of this paper is as follows. 
In Section 2, we give a proof of Theorem \ref{mthm_int}. 
In Section 3, as an application of the main theorem, we study rings for which the lower bound on $\dim\db(R)$ given by $\dim R$ is optimal.
\begin{conv}
Throughout this paper, $R$ denotes a commutative noetherian ring.
All subcategories are assumed to be strictly full.
\end{conv}
\section{Proof of the main theorem}
In this section, we recall definitions related to the Rouquier dimension of a triangulated category and provide a proof of the main result.
We begin by recalling the definitions of level, generation time, and Rouquier dimension in a triangulated category.
\begin{dfn}\label{def_dim}
Let $\C$ be an additive category and $\T$ a triangulated category.
\begin{enumerate}[\rm(1)]
\item
For a subcategory $\X$ of $\C$, we denote by $\add_{\C}\X$ the {\em additive closure} of $\X$ in $\mathcal{C}$, that is, the subcategory of $\mathcal{C}$ consisting of direct summands of finite direct sums of objects in $\X$.
\item
For subcategories $\X,\Y$ of $\T$, we denote by $\X\ast\Y$ the subcategory of $\T$ consisting of objects $E$ such that there exists an exact triangle $X \to E \to Y \to$ in $\T$ with $X\in \X$ and $Y \in \Y$.
\item
Let $\X$ be a subcategory of $\T$.
We set ${\langle\X\rangle}_{0}^{\T}=0$, and ${\langle\X\rangle}_{1}^{\T}=\add_{\T}\{X[n]\mid n \in \mathbb{Z} \text{ and } X\in\X\}$.
For an integer $r>1$, we inductively define ${\langle\X\rangle}_{r}^{\T}$ to be $\langle{{\langle\X\rangle}_{r-1}^{\T}\ast {\langle\X\rangle}_{1}^{\T}\rangle}_{1}^{\T}$.
\item
For a subcategory $\X$ of $\T$ and an object $M$ in $\T$, we define the $\X$-{\em level} of $M$ in $\T$, denoted by $\level_{\T}^{\X}M$, as the infimum of integers $n\ge0$ such that $M\in{\langle\X\rangle}_{n}^{\T}$.
\item
For an object $G\in\T$, the {\em generation time} of $G$ is defined as $\gt(G)=\sup\{\level_{\T}^{G}X-1\mid X\in\T\}$.
We say that $G$ is a {\em strong generator} of $\T$ if $\gt(G)<\infty$.
\item
The {\em Rouquier dimension} of $\T$, denoted by $\dim\T$, is defined as the infimum of $\gt(G)$, where $G$ is a strong generator of $\T$.
\end{enumerate}
\end{dfn}
The following elementary lemma will be useful in constructing nonzero morphisms in the derived category. 
Its proof is straightforward.
\begin{lem}\label{lem_alg}
Let $B$ be an $R$-algebra and $x_1,\ldots,x_d$ elements of $R$ such that the sequence $x_1,\ldots,x_d$ forms a $B$-sequence.
Then for all positive integers $n_1,\ldots,n_d$, one has $x_{1}^{n_{1}}\cdots x_{d}^{n_{d}} \cdot 1_{B}\notin (x_{1}^{n_{1}+1},\ldots,x_{d}^{n_{d}+1})B$.
\end{lem}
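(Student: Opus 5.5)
The statement to prove is Lemma \ref{lem_alg}. I will argue by contradiction, inducting on $d$, and use only that $x_1,\ldots,x_d$ is a $B$-sequence. Write $y=x_1^{n_1}\cdots x_d^{n_d}$ and $I=(x_1^{n_1+1},\ldots,x_d^{n_d+1})B$.

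\emph{Step 1: reduction to powers.} Since $x_1,\ldots,x_d$ is a $B$-sequence, so is $x_1^{m_1},\ldots,x_d^{m_d}$ for all positive integers $m_i$. This is the standard fact that powers of a regular sequence form a regular sequence. The condition $B/(x_1,\ldots,x_d)B\neq 0$ carries over because $(x_1^{m_1},\ldots,x_d^{m_d})B\subseteq(x_1,\ldots,x_d)B$.

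\emph{Step 2: induction on $d$.} Suppose $y\cdot 1_B\in I$. For $d=1$ we get $x_1^{n_1}=x_1^{n_1+1}b$, so $x_1^{n_1}(1-x_1b)=0$. Because $x_1$ is a nonzerodivisor on $B$, this gives $1=x_1b$. Then $B=x_1B$, contradicting the $B$-sequence hypothesis.

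For $d\ge 2$, put $B'=B/x_d^{n_d+1}B$. By Step 1, $x_d^{n_d+1},x_1,\ldots,x_{d-1}$ is a $B$-sequence up to permutation, but permuting a regular sequence is delicate for a non-local $B$. I will therefore avoid reordering and run the induction on the first element instead. Set $B'=B/x_1^{n_1+1}B$; then $x_2,\ldots,x_d$ is a $B'$-sequence. Write
\[
x_1^{n_1}z = x_1^{n_1+1}b_1+\sum_{i\ge2}x_i^{n_i+1}b_i,\qquad z=x_2^{n_2}\cdots x_d^{n_d}.
\]
Then $x_1^{n_1}(z-x_1b_1)\in J=(x_2^{n_2+1},\ldots,x_d^{n_d+1})B$.

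The key claim is that $x_1^{n_1}$ is a nonzerodivisor on $B/J$. To see this, let $B''=B/(x_2^{n_2+1},\ldots,x_d^{n_d+1})B$. The sequence $x_1^{n_1},x_2^{n_2+1},\ldots,x_d^{n_d+1}$ is regular on $B$ by Step 1, and I need $x_1$ regular modulo the later elements. That again requires permutability.

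To avoid this, I will work with Koszul homology instead. For a regular sequence $\mathbf{x}$ on $B$, $H_1$ of the Koszul complex on any sequence of powers vanishes. Koszul homology does not depend on the order of the elements, so regularity-like vanishing holds in any order. Concretely, I will use $H_1(x_2^{n_2+1},\ldots,x_d^{n_d+1},x_1^{n_1};B)=0$, obtained from the order-independence of Koszul homology. The long exact sequence for adding one element then shows $x_1^{n_1}$ is a nonzerodivisor on $B/J$.

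Hence $z-x_1b_1\in J$, that is, $z\in(x_1,x_2^{n_2+1},\ldots,x_d^{n_d+1})B$. Passing to $\bar B=B/x_1B$, on which $x_2,\ldots,x_d$ is a regular sequence, we get $x_2^{n_2}\cdots x_d^{n_d}\in(x_2^{n_2+1},\ldots,x_d^{n_d+1})\bar B$. This contradicts the induction hypothesis.

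\emph{Main obstacle.} The delicate point is justifying that $x_1^{n_1}$ is a nonzerodivisor modulo $J$ without assuming permutability of regular sequences. I would handle it with the Koszul homology argument above: vanishing of $H_1$ is order-independent, and the long exact sequence for adding one element gives the nonzerodivisor property. Everything else is routine.
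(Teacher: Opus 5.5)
Your proof is correct. The paper gives no argument for this lemma (it calls the proof straightforward), so there is no written proof to compare against. What matters is that your final argument works in the stated generality, where $B$ is an arbitrary $R$-algebra and neither locality nor permutability of regular sequences is available.

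The crux is your claim that $x_1^{n_1}$ is a nonzerodivisor on $B/J$, where $J=(x_2^{n_2+1},\ldots,x_d^{n_d+1})B$. Your Koszul justification is right:
\begin{enumerate}[\rm(1)]
\item By Step 1, the sequence $x_1^{n_1},x_2^{n_2+1},\ldots,x_d^{n_d+1}$ is $B$-regular, so its Koszul complex is acyclic in positive degrees over any ring.
\item The Koszul complex is independent of the order of the elements, up to isomorphism.
\item The exact segment $H_1(x_2^{n_2+1},\ldots,x_d^{n_d+1},x_1^{n_1};B)\to B/J\xrightarrow{\pm x_1^{n_1}}B/J$ then gives injectivity.
\end{enumerate}
Passing to $B/x_1B$ closes the induction, since $x_2,\ldots,x_d$ is a sequence there by definition, including $B/(x_1,\ldots,x_d)B\neq 0$. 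In a write-up, delete the abandoned detours: both auxiliary rings called $B'$ and the discussion of reordering. None of them is used.

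Two remarks for comparison. First, the paper applies the lemma only when $B$ is balanced big Cohen--Macaulay and $x_1,\ldots,x_d$ is a system of parameters. There every reordering of every sequence of powers is again a system of parameters, hence $B$-regular, so the permutability issue never arises. That is presumably why the author calls the lemma straightforward.

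Second, for arbitrary $B$ there is a non-inductive alternative. Take the chain map $\K(x_1,\ldots,x_d;B)\to\K(x_1^{n_1+1},\ldots,x_d^{n_d+1};B)$ obtained by tensoring the maps $\K(x_i)\to\K(x_i^{n_i+1})$ that are the identity in degree $1$ and multiplication by $x_i^{n_i}$ in degree $0$. On $H_0$ it sends $\bar 1$ to $\overline{x_1^{n_1}\cdots x_d^{n_d}}$. If that class is zero, the map is null-homotopic, because the source is free and the target is acyclic. Reading off degree $d$, where the map is the identity, then forces $1\in(x_1,\ldots,x_d)B$. This argument and your induction rest on the same input, namely Step 1 plus Koszul acyclicity, so neither is genuinely more elementary.
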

Let $\T$ be an $R$-linear triangulated category and $G$ an object in $\T$.
We set $\hom_{\T}^{*}(-,G)=\bigoplus_{i\in\mathbb{Z}}\hom_{\T}(-,G[i])$.
Note that the functor $\hom_{\T}^{*}(-,G):\T^{\mathrm{op}}\to \Mod R$ is a cohomological functor.
Recall that a morphism $f:M\to N$ in $\T$ is {\em $G$-coghost} if the map $\hom_{\T}^{*}(f,G)$ is zero.
The following lemma follows from the argument dual to that of \cite[Lemma 2.3]{L}.
\begin{lem}\label{lem_coghost}
Let $\T$ be an $R$-linear triangulated category and $G,N$ objects in $\T$.
Let $x$ be an element in $R$ such that $x^n\Gamma_{x}H=0$ for some $n>0$, where $H=\hom_{\T}^{*}(N,G)$.
Then the composition $\delta(x^{n+1})\circ (x^{n}\cdot 1_{N})$ is $G$-coghost, where $\delta(x^{n+1}):N\to \cone(x^{n+1}\cdot 1_{N})$ is the canonical morphism.
\end{lem}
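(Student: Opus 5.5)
We need to prove Lemma \ref{lem_coghost}: for every $i$, the map $\hom_{\T}(\delta(x^{n+1})\circ(x^n\cdot 1_N), G[i])$ vanishes. The argument is the dual of \cite[Lemma 2.3]{L}, carried out in the cohomological functor $\hom_{\T}^{*}(-,G)$.

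Set $C=\cone(x^{n+1}\cdot 1_N)$. Apply $\hom_{\T}^{*}(-,G)$ to the exact triangle
\[
N\xrightarrow{x^{n+1}}N\xrightarrow{\delta(x^{n+1})}C\to N[1].
\]
This gives a long exact sequence of graded $R$-modules. In the relevant portion,
\[
\hom_{\T}^{*}(C,G)\xrightarrow{\delta(x^{n+1})^{*}}H\xrightarrow{(x^{n+1})^{*}}H,
\]
the map $(x^{n+1})^{*}$ is multiplication by $x^{n+1}$ on $H$, because $\T$ is $R$-linear. Exactness then shows that the image of $\delta(x^{n+1})^{*}$ equals $\{h\in H : x^{n+1}h=0\}$. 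This submodule is contained in $\Gamma_x H=\{h : x^k h=0 \text{ for some } k\}$.

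Next consider the composite. Contravariance gives
\[
\hom_{\T}^{*}(\delta(x^{n+1})\circ x^n,G)=(x^n)^{*}\circ\delta(x^{n+1})^{*}.
\]
Here $(x^n)^{*}$ is again multiplication by $x^n$ on $H$. So for any $\varphi\in\hom_{\T}^{*}(C,G)$, the element $h=\varphi\circ\delta(x^{n+1})$ lies in $\Gamma_xH$, and the composite sends $\varphi$ to $x^n h$. The hypothesis $x^n\Gamma_x H=0$ gives $x^n h=0$, so the composite is zero and the morphism is $G$-coghost.

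The only point needing care is that precomposition with $x^m\cdot 1_N$ agrees with multiplication by $x^m$ on $\hom_{\T}(N,G[i])$. This follows from $R$-linearity: the $R$-action on Hom-sets is compatible with composition on either side. There is no real obstacle beyond this.
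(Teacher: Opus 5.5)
Your proof is correct. It is essentially the argument the paper intends: the paper gives no details and only says the lemma follows by dualizing \cite[Lemma 2.3]{L}, and your write-up carries out that dualization explicitly. Exactness of $\hom_{\T}^{*}(C,G)\to H\xrightarrow{x^{n+1}}H$ puts every $\varphi\circ\delta(x^{n+1})$ in $(0:_H x^{n+1})\subseteq\Gamma_x H$, and $x^n$ kills $\Gamma_x H$ by hypothesis.
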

We have reached the main result of this section.
The proof proceeds by using the lemmas stated above and the existence of a balanced big Cohen--Macaulay algebra to construct $\dim R$ nonzero coghost maps; the assertion then follows from the coghost lemma.
\begin{thm}\label{mthm}
Let $(R,\m)$ be a $d$-dimensional noetherian local ring and  $x_{1},\ldots,x_{d}$ a system of parameters of $R$.
Then for any object $G\in\db(R)$, there exist positive integers $n_1,\ldots,n_d$ such that $\level_{\db(R)}^{G}\K(x_{1}^{n_{1}+1},\ldots,x_{d}^{n_{d}+1})\ge d+1$.
In particular, one has $d\le \gt(G)$, and we obtain that $d\le\dim\db(R)$.
\end{thm}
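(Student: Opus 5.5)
The plan is to use the coghost lemma in its contravariant form. If $f_1,\dots,f_d$ are composable $G$-coghost maps $K=M_0\to M_1\to\cdots\to M_d$ whose composite is nonzero, then $\level^G K\ge d+1$. The maps come from Lemma \ref{lem_coghost}, and nonvanishing of the composite is checked after applying a balanced big Cohen--Macaulay algebra, using Lemma \ref{lem_alg}.

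First I fix the integers $n_i$. Put $H=\hom^*_{\db(R)}(N,G)$ for the relevant object $N$. Since $G$ and $N$ lie in $\db(R)$, the module $\hom_{\db(R)}(N,G[i])$ is finitely generated for each $i$ and vanishes for $|i|\gg0$. Hence $H$ is a finitely generated $R$-module, and $\Gamma_{x}H$ is killed by a power of $x$. The subtlety is that the $N$'s change along the tower, and $n_i$ must be chosen after $N$ is known. So I choose $n_i$ successively. Set $K_0=R$. Inductively let $K_i=\cone(x_i^{n_i+1}\cdot 1_{K_{i-1}})$, so that $K_i=\K(x_1^{n_1+1},\dots,x_i^{n_i+1})$. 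Here $n_i$ is chosen with $x_i^{n_i}\Gamma_{x_i}\hom^*(K_{i-1},G)=0$. Lemma \ref{lem_coghost} then gives coghost maps $g_i=\delta(x_i^{n_i+1})\circ(x_i^{n_i}\cdot 1_{K_{i-1}}):K_{i-1}\to K_i$.

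These maps go the wrong way for the contravariant coghost lemma, which wants a chain starting at the object whose level we bound. So I work with $K=K_d$ and dualize. A Koszul complex is self-dual up to shift, $\mathbf{R}\hom(K_d,R)\cong K_d[-d]$. The same construction then yields maps out of $K_d$, namely $K_d\to K_d[1]\otimes$-type maps, peeling off one Koszul factor at a time. Concretely, tensor the chain $R\to K_1\to\cdots\to K_d$ with $K_d$, or use the factorization $K_d=K_{d-1}\otimes\K(x_d^{n_d+1})$. If the bookkeeping gets awkward, I would instead use the covariant version of the coghost lemma, with $\hom^*(-,G)$ replaced by $\hom^*(G,-)$ (ghost maps) and Letz's original Lemma 2.3. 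In that version the chain $R\to K_1\to\cdots\to K_d$ consists of ghost maps, and the conclusion is $\level^G K_d\ge d+1$. Letz's lemma needs $x^n\Gamma_x\hom^*(G,N)=0$, which again holds by finite generation. I will commit to whichever orientation makes the composite land in or start from $K_d$. The paper's phrasing ("coghost") suggests the chain $K_d\to\cdots$ obtained by duality.

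The main obstacle is showing that the composite is nonzero. The composite $R\to K_d$ is the class in $H_0$-degree given by multiplication by $x_1^{n_1}\cdots x_d^{n_d}$ into $K_d$, followed by the projection to the degree-$d$ shift, viewed in $\hom(R,K_d[?])$ or dually. To detect that it is nonzero, I apply $-\otimes^{\mathbf L}_R B$, where $B$ is a balanced big Cohen--Macaulay $R$-algebra (Andr\'e), so $x_1,\dots,x_d$ is a $B$-sequence. Then $K_d\otimes B$ is quasi-isomorphic to $B/(x_1^{n_1+1},\dots,x_d^{n_d+1})B$ concentrated in one degree. The composite becomes the map $B\to B/(x^{n+1})B$ sending $1\mapsto x_1^{n_1}\cdots x_d^{n_d}$. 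This map is nonzero by Lemma \ref{lem_alg}, so the original composite is nonzero. Thus $\level^G K_d\ge d+1$ and $\gt(G)\ge d$. Since $G$ is arbitrary, $d\le\dim\db(R)$.
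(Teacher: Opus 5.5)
Your tower $R=K_0\to K_1\to\cdots\to K_d$, with the $n_i$ chosen inductively so that Lemma~\ref{lem_coghost} makes each $g_i$ a $G$-coghost map, matches the paper, and so does your nonvanishing argument via a balanced big Cohen--Macaulay algebra and Lemma~\ref{lem_alg}. The gap is the final step: you have the coghost lemma backwards. For maps killed by $\hom^*(-,G)$, the lemma says the following: if $K\in\langle G\rangle^{\db(R)}_d$, then every composite of $d$ $G$-coghost maps \emph{ending} at $K$ is zero. Chains \emph{starting} at $K$ belong to the ghost lemma.

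The version you state is false, and your own tower refutes it. Take $G=R$ and $d\ge 1$. The tower is then a chain of $R$-coghost maps out of $R$ with nonzero composite, so your statement would give $\level^{R}_{\db(R)}R\ge d+1$, whereas this level is $1$. So the chain you built is already correctly oriented, since it ends at $K_d$. The correct coghost lemma gives $\level^{G}_{\db(R)}K_d\ge d+1$ directly, with no dualization; that is exactly how the paper concludes.

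The detours you propose do not supply this step.
\begin{enumerate}[\rm(1)]
\item The self-duality route is never actually specified.
\item The ghost route fails. The maps $g_i$ are not $G$-ghost in general: for $G=R$, $\hom(R,g_1)$ sends $1_R$ to the class of $x_1^{n_1}$ in $H^0(K_1)=R/(x_1^{n_1+1})$, which is nonzero. Even a chain of ghost maps $R\to\cdots\to K_d$ would bound the level of its source $R$, not of $K_d$.
\item The genuine ghost analogue of Lemma~\ref{lem_coghost} uses maps $\cone(x^{n+1}\cdot 1_N)[-1]\to N$, giving a chain $K_d[-d]\to\cdots\to R$. But it needs $x^n\Gamma_x\hom^*(G,N)=0$, and your appeal to finite generation is invalid there. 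For $G\in\db(R)$ and $N$ perfect, $\hom^*(G,N)$ need not be finitely generated; for example, $\mathrm{Ext}^i_R(k,R)\ne0$ for infinitely many $i$ if $R$ is not Gorenstein.
\end{enumerate}
What is finitely generated is $\hom^*(N,G)$ with $N$ perfect, which is why the paper works on the coghost side. Your own first step also needs perfectness of $N=K_{i-1}$, not merely $N\in\db(R)$ as you phrase it.

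Finally, the composite $R\to K_d$ is simply $x_1^{n_1}\cdots x_d^{n_d}\cdot e$, where $e\colon R\to K_d$ is the canonical map. Equivalently, it is the class of the monomial in $H^0(K_d)=R/(x_1^{n_1+1},\ldots,x_d^{n_d+1})$; no projection onto a shifted degree is involved.
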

\begin{proof}
Set $E_0=R$. 
We shall construct objects $E_i\in \perf(R)$ and morphisms $\delta_{i}:E_{i-1}\to E_{i}$ for $1\le i\le d$ inductively. 
Suppose that $E_{i-1}$ has been constructed. 
Since $E_{i-1}\in \perf(R)$ and $G\in \db(R)$, the $R$-module $\hom_{\db(R)}^{*}(E_{i-1},G)$ is finitely generated. 
Hence there exists an integer $n_{i}>0$ such that $x_{i}^{n_i}\Gamma_{x_{i}}\hom_{\db(R)}^{*}(E_{i-1},G)=0$. 
We set $E_{i}=\cone(x_{i}^{n_{i}+1}\cdot 1_{E_{i-1}})$ and let $\delta_{i}:E_{i-1}\to E_{i}$ be the canonical morphism.
By Lemma \ref{lem_coghost}, the morphism $\delta_{i}\circ (x_{i}^{n_i}\cdot 1_{E_{i-1}}):E_{i-1}\to E_{i}$ is $G$-coghost. 
By construction, $E_{d}$ is the Koszul complex $\K(x_{1}^{n_{1}+1},\ldots,x_{d}^{n_{d}+1})$. 
Let $e=\delta_{d}\circ\cdots\circ\delta_{1}:R\to E_{d}$, and set $\alpha=(\delta_{d}\circ (x_{d}^{n_{d}}\cdot 1_{E_{d-1}}))\circ\cdots\circ(\delta_{1}\circ (x_{1}^{n_{1}}\cdot 1_{E_{0}}))=x_{1}^{n_{1}}\cdots x_{d}^{n_{d}}\cdot e$. 
Then $\alpha$ is a composition of $d$ $G$-coghost maps.
It remains to show that $\alpha\ne 0$ in $\db(R)$. 
Let $B$ be a balanced big Cohen--Macaulay $R$-algebra, whose existence follows from \cite{Andre2018}.
Applying $-\Lotimes_R B$ to $\alpha$, we obtain a morphism $\alpha\Lotimes_R B: B \to E_{d}\Lotimes_R B$ in $\D(\Mod B)$. 
Under the standard identification of $E_{d}\Lotimes_R B$ with the Koszul complex on $x_{1}^{n_{1}+1},\ldots,x_{d}^{n_{d}+1}$ over $B$, the map $H^0(\alpha\Lotimes_R B):B\to B/(x_{1}^{n_{1}+1},\ldots,x_{d}^{n_{d}+1})B$ sends $1_B$ to $\overline{x_{1}^{n_{1}}\cdots x_{d}^{n_{d}}\cdot 1_{B}}$.
Since $B$ is balanced big Cohen--Macaulay, the sequence $x_{1},\ldots,x_{d}$ is $B$-regular, and hence the element $\overline{x_{1}^{n_{1}}\cdots x_{d}^{n_{d}}\cdot 1_{B}}$ is nonzero by Lemma \ref{lem_alg}.
Thus $\alpha\Lotimes_R B\ne 0$, and therefore $\alpha\ne 0$.
Now the coghost lemma \cite[Lemma 4.11]{Rou} implies that $\level_{\db(R)}^{G} E_{d}\ge d+1$.
This proves the desired inequality. 
The inequalities $d\le \gt(G)$ and $d\le \dim\db(R)$ follow immediately from the definitions of generation time and Rouquier dimension.
\end{proof}
The following result is a direct consequence of Theorem \ref{mthm}.
\begin{cor}\label{dim}
Let $R$ be a commutative noetherian ring.
Then one has
\[
\dim R\le \dim\db(R).
\]
In particular, if $\dim\db(R)<\infty$, then $\dim R<\infty$.
\end{cor}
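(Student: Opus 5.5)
The plan is to reduce to the local case via localization and then apply Theorem \ref{mthm}. If $\dim\db(R)=\infty$ there is nothing to prove, so assume $\dim\db(R)=n<\infty$ and pick $G\in\db(R)$ with $\gt(G)=n$. For any prime $\p$ of $R$, the localization functor $\db(R)\to\db(R_\p)$, $X\mapsto X_\p$, is an exact functor. I will first check that it is essentially surjective up to direct summands: every finitely generated $R_\p$-module is $M_\p$ for some $M\in\mod R$, and every bounded complex of finitely generated $R_\p$-modules is, up to quasi-isomorphism, the localization of a bounded complex over $R$. The latter holds by clearing denominators in the differentials, or more simply by induction on the number of nonzero cohomology modules using truncation triangles, since morphisms in $\db(R_\p)$ between localized objects lift up to units.

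Exact functors do not increase levels: if $X\in\langle G\rangle_k$ then $X_\p\in\langle G_\p\rangle_k$, since localization preserves shifts, finite sums, summands and triangles. Hence every object of $\db(R_\p)$ that is a localization (or a summand of one) has $G_\p$-level at most $n+1$, and so $\gt(G_\p)\le n$. Therefore $\dim\db(R_\p)\le n$. Essential surjectivity up to summands suffices here, because $\langle G_\p\rangle_{n+1}$ is closed under summands.

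Now $R_\p$ is a noetherian local ring of dimension $\operatorname{ht}\p$, and Theorem \ref{mthm} gives $\operatorname{ht}\p\le\dim\db(R_\p)\le n$. Taking the supremum over all primes gives $\dim R=\sup_\p\operatorname{ht}\p\le n$. The finiteness statement follows at once.

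The only step that needs care is the essential surjectivity of localization on bounded derived categories. If the lifting argument gets messy, I would avoid it by applying Theorem \ref{mthm} directly to Koszul objects. The objects $\K(x_1^{n_1+1},\ldots,x_d^{n_d+1})$ over $R_\p$ are visibly localizations of Koszul complexes over $R$: choose a system of parameters of $R_\p$ with $x_i\in R$ after clearing denominators, which is harmless since units do not change the ideal. The theorem then says that, for $G_\p$, such an object has level at least $d+1$. Since the corresponding Koszul complex over $R$ has $G$-level at most $n+1$, and levels do not increase under localization, we get $d+1\le n+1$. This bypasses the general lifting question entirely.
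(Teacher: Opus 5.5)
Your argument is correct and matches the paper's proof: for each prime $\p$ one has $\dim R_\p\le\dim\db(R_\p)\le\dim\db(R)$, with the first inequality from Theorem~\ref{mthm}. The second inequality is what you verify by hand (localization is exact and essentially surjective, since morphisms between localized objects lift up to units), whereas the paper simply cites it as \cite[Lemma 4.2]{AT2015}. Your Koszul-object variant is also valid and slightly more economical. It only needs two facts: levels do not increase under the exact functor $(-)_\p$, and the Koszul complexes produced by Theorem~\ref{mthm} over $R_\p$ are localizations of Koszul complexes over $R$.
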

\begin{proof}
For any prime ideal $\p$ of $R$, we have $\dim R_{\p} \le \dim\db(R_{\p}) \le \dim\db(R)$, where the first inequality follows from Theorem \ref{mthm} and the second one follows from \cite[Lemma 4.2]{AT2015}.
Taking the supremum over all prime ideals $\p$, we obtain the desired conclusion.
\end{proof}
\begin{rem}
The lower bound on $\dim\db(R)$ in terms of $\dim R$ given in Corollary \ref{dim} is not optimal in general; see \cite[Remark 7.18]{Rou} and \cite[Corollary 5.10]{BIKO2010} for instance.
\end{rem}
\section{Examples}
In this section, we give a criterion for a local ring, not necessarily Cohen--Macaulay, to have finite $d$-syzygy representation type. 
This criterion yields examples for which the lower bound $\dim R\le \dim\db(R)$ is optimal.
We first consider the class of local rings of finite $d$-syzygy representation type, which is a generalization of finite Cohen--Macaulay representation type.
We refer the reader to \cite{DKLO,M2} for details on finite syzygy representation type.
\begin{dfn}
Let $R$ be a commutative noetherian ring.
\begin{enumerate}[\rm(1)]
\item
For a subcategory $\X$ of $\mod R$ and $n>0$, we denote by $\Omega^{n}\X$ the subcategory of $\mod R$ consisting of modules $M$ such that there exists an exact sequence $0 \to M \to F_{n-1} \to \cdots \to F_0 \to X \to 0$ with each $F_i \in \proj R$ and $X \in \X$.
We set $\Omega^0 \X=\X$.
When $R$ is a local ring, for $M \in \mod R$ we denote by $\Omega^n M$ the $n$-th syzygy of $M$ in its minimal free resolution.
\item
For an integer $n\ge 0$, we say that $R$ is {\em of finite $n$-syzygy representation type} if $\Omega^n(\mod R)\subseteq \add G$ for some $G\in\mod R$.
\end{enumerate}
\end{dfn}
\begin{rem}
Let $R$ be a local ring and $n$ a nonnegative integer.
\begin{enumerate}[\rm(1)]
\item
If $n>0$, then one has $\Omega^n(\mod R)=\{R^{\oplus m} \oplus \Omega^{n}M\mid m\ge 0, M\in\mod R\}$.
\item
By \cite[Theorem 2.2]{LW}, $R$ is of finite $n$-syzygy representation type if and only if $\add\Omega^n(\mod R) = \add G$ for some $G\in\mod R$.
\item
Suppose that $R$ is of finite $n$-syzygy representation type for some $n\ge 0$.
Then one has $\dim R\le n$ by \cite[Remark 4.7]{DLMO}.
Moreover, $R$ has an isolated singularity; see \cite[Theorem 3.7]{DKLO}, \cite[Theorem 3.8]{M2}, and \cite[Theorem 2.2]{LW}.
\item
If $R$ is a $d$-dimensional Cohen--Macaulay local ring, then $R$ is of finite $d$-syzygy representation type if and only if $R$ is of finite Cohen--Macaulay representation type; see \cite[Proposition 12.8]{LW}.
\end{enumerate}
\end{rem}
The following result shows that, for a $d$-dimensional noetherian ring $R$ of finite $d$-syzygy representation type, one can give an explicit upper bound for $\dim\db(R)$ in terms of $d$.
Moreover, $\dim\db(R)=d$ when $R$ is a Cohen--Macaulay local ring of finite Cohen--Macaulay representation type of positive dimension.
\begin{prop}\label{d-syz}
Let $R$ be a $d$-dimensional noetherian ring of finite $d$-syzygy representation type.
Then one has $d\le \dim\db(R)\le \max\{1,2(d-1)\}$.
In particular, if $d\in\{1,2\}$, then we have $\dim\db(R)=d$.
If, moreover, $R$ is a Cohen--Macaulay local ring, then one has $d\le \dim\db(R)\le \max\{1,d\}$.
\end{prop}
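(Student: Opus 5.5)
The lower bound $d\le\dim\db(R)$ is exactly Corollary \ref{dim}, so all the work is in the upper bound. Here $R$ is local with isolated singularity; the remark above gives this, so $R$ is local as in the setting of finite syzygy type. Fix $G\in\mod R$ with $\Omega^d(\mod R)\subseteq\add G$. We may assume $R\in\add G$, replacing $G$ by $G\oplus R$. The strategy is to bound $\level^{G}_{\db(R)}X$ for every $X\in\db(R)$ by $\max\{2,2d-1\}$, that is, generation time at most $\max\{1,2(d-1)\}$. We do this in two steps: first bound the level of modules, then pass to complexes by truncation.

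For a module $M$, take the truncated minimal free resolution $0\to\Omega^dM\to F_{d-1}\to\cdots\to F_0\to M\to0$. Splicing it gives $M\in\langle R\rangle_d\ast\langle\Omega^dM\rangle_1$ in $\db(R)$: the brutal truncation of the resolution is a complex of length $d$ built from the free modules, and it is glued to $\Omega^d M[d]$. Since $\Omega^dM\in\add G$, this yields $\level^G M\le d+1$. This is the naive bound, and it must be improved. The refinement is to write $M$ as the cone of a morphism $\Omega^{d}M[d-1]\to P$, where $P$ is the complex $F_{d-1}\to\cdots\to F_0$. A complex of free modules of length $d$ has $R$-level at most $d$, but it is better to observe that $\Omega^{d-1}$-type syzygies of length arguments let us place $F_{d-1}$ together with $\Omega^dM$. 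Concretely, $\Omega^{d-1}M$ is an extension of $\Omega^dM$ by $F_{d-1}$, so it has $G$-level at most $2$, and $M$ is then built from $\Omega^{d-1}M$ and $d-1$ free modules. This gives level at most $d+1$ again, so this route alone is not sharper.

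For complexes, I would use the following standard argument. Any $X\in\db(R)$ is quasi-isomorphic to a complex $P$ of finite free modules bounded above. Choose a brutal truncation so that $X$ sits in a triangle $\sigma_{\ge s}P\to X\to N[t]$, where $N$ is a high syzygy module, hence in $\add G$, and $\sigma_{\ge s}P$ is a bounded complex of free modules. One then needs the width of the free part to be controlled. The idea is to use the two "halves" technique, as in Christensen and Krause-Kussin: the homology of $X$ may be spread out, but by a ghost-lemma style induction on the number of nonzero cohomology modules it does not suffice. Instead, I would use the stronger fact that, since $R$ has an isolated singularity and finite $d$-syzygy type, $\Omega^{d}(\mod R)$ is contained in $\add G$, with all higher syzygies as well. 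Then I would bound via a resolution of $X$ truncated so that its kernel complex consists of $d$-th syzygies; each piece of width $d-1$ contributes level $d-1$, and two such pieces plus the $G$-part give $2(d-1)+1$.

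In the Cohen--Macaulay case, $\Omega^d(\mod R)$ consists of maximal Cohen--Macaulay modules, and one can use that $\mathrm{CM}(R)\subseteq\add G$ together with the known bound $\dim\db(R)\le\max\{1,d\}$. I would approach this by resolving $X$ by maximal Cohen--Macaulay approximations: the Auslander--Buchweitz approximation gives a finite length $d$ resolution by objects of $\add G$, which yields level at most $d+1$ for every complex, possibly after first reducing to modules by a filtration argument. The main obstacle is the passage from modules to arbitrary complexes without losing a factor. I expect the $2(d-1)$ bound to come precisely from this doubling, and I would try to verify the truncation triangle explicitly first.
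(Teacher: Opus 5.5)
\textbf{There is a genuine gap: the upper bound for complexes is never proved.} Your lower bound and your estimate $\level^{G}M\le d+1$ for modules are fine. But your last two paragraphs reach $2(d-1)+1$ by counting rather than by a construction, and you name the passage from modules to complexes as the obstacle without resolving it.

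The triangle $\sigma_{\ge s}P\to X\to N[t]$ cannot give a uniform bound, because the width of $\sigma_{\ge s}P$ grows with the amplitude of $X$. Filtering $X$ by its cohomology has the same defect.

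The missing idea is the decomposition $\db(R)=\langle\Omega^{2}\mod R\rangle_{1}\ast\langle\Omega\mod R\rangle_{1}$, which the paper takes from \cite[Proposition 2.6]{AAITY}. To see it, take a projective resolution $P$ of $X$ and, for $s\ll0$, the bounded complex $P'=(0\to B^{s}\to P^{s}\to P^{s+1}\to\cdots)$, which is quasi-isomorphic to $X$; here $B^{s}$ is the image of $P^{s-1}\to P^{s}$. The cycles $Z^{i}$ of $P'$ form a subcomplex with zero differential, and the quotient also has zero differential, with the boundaries as its terms. This gives a triangle $\bigoplus_{i}Z^{i}[-i]\to X\to\bigoplus_{i}B^{i+1}[-i]\to$. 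Each $Z^{i}$ is a second syzygy and each $B^{i+1}$ is a first syzygy, so the amplitude of $X$ drops out entirely.

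\textbf{How to finish the general case.} You do not need a sharper bound for arbitrary modules, only the evident one for syzygies. If $d\ge2$ and $N\in\Omega^{j}(\mod R)$ with $j\in\{1,2\}$, then $\Omega^{d-j}N\in\Omega^{d}(\mod R)\subseteq\add G$. A truncated projective resolution of $N$ then gives $N\in\langle G\oplus R\rangle_{d-j+1}$. Hence every complex lies in $\langle G\oplus R\rangle_{(d-1)+d}=\langle G\oplus R\rangle_{2d-1}$. For $d\le1$ both pieces lie in $\add G$, so $\db(R)=\langle G\rangle_{2}$.

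This argument works with projective resolutions over any noetherian $R$. Your reduction to the local case is not justified, since the remark you invoke assumes $R$ is local, but it is also unnecessary.

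\textbf{The Cohen--Macaulay case has the same problem.} Resolving modules by MCM modules and then reducing complexes to modules by a filtration gives nothing uniform. The decomposition above only yields level $2d-1$, not $\max\{2,d+1\}$. That sharper bound is a genuinely finer result, \cite[Theorem 4.1]{AAITY}: when $R$ has finite CM type it gives $\db(R)=\langle M\rangle_{\max\{2,d+1\}}$ for an MCM module $M$. If your ``known bound'' refers to this theorem, citing it is what the paper does. Your approximation sketch does not reprove it.
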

\begin{proof}
By Corollary \ref{dim}, we have $\dim R \le \dim \db(R)$. 
On the other hand, by \cite[Proposition 2.6]{AAITY}, we have $\db(R)={\langle \Omega^{2}\mod R\rangle}_{1}^{\db(R)} \ast {\langle \Omega\mod R\rangle}_{1}^{\db(R)}$.
Assume that $\Omega^d(\mod R)\subseteq \add G$ for some $G\in\mod R$. 
If $d\le 1$, then $\db(R)={\langle\Omega\mod R\rangle}_{2}^{\db(R)}={\langle G\rangle}_{2}^{\db(R)}$, and hence $\dim\db(R)\le 1$.
Suppose now that $d\ge 2$. 
Then $\Omega^2(\mod R)\subseteq {\langle G\oplus R\rangle}_{d-1}^{\db(R)}$ and $\Omega(\mod R)\subseteq {\langle G\oplus R\rangle}_{d}^{\db(R)}$. 
It follows that $\db(R)={\langle G\oplus R\rangle}_{2d-1}^{\db(R)}$, and therefore $\dim\db(R)\le 2(d-1)$.
If, moreover, $R$ is a Cohen--Macaulay local ring, then $R$ is of finite Cohen--Macaulay representation type, and one has $\db(R)={\langle M\rangle}_{\max\{2,d+1\}}^{\db(R)}$ for some maximal Cohen--Macaulay $R$-module $M$ by \cite[Theorem 4.1]{AAITY}.
\end{proof}
\begin{rem}
\begin{enumerate}[\rm(1)]
\item
When $R$ is a Cohen--Macaulay local ring, the lower bound in Proposition \ref{d-syz} also follows from the third assertion of Theorem \ref{prev}, which is due to Letz, without using Corollary \ref{dim}.
\item
Combining \cite[Theorem 4.1]{AAITY} with \cite[Proposition 3.2(1)]{DT2023}, one has $\db(R)={\langle \c(R)\rangle}_{\max\{2,d+1\}}^{\db(R)}$, where $\c(R)$ denotes the subcategory of $\mod R$ consisting of modules $M$ such that $\depth M_{\p}\ge\depth R_{\p}$ for all $\p\in\spec R$.
However, if $R$ is not Cohen--Macaulay, then $\c(R)$ is not of finite type; see \cite[Remark 4.7]{DLMO}.
\end{enumerate}
\end{rem}
In the rest of this section, we construct rings of finite $d$-syzygy representation type. 
We first establish the following lemma.
For subcategories $\X,\Y$ of $\mod R$, we denote by $\X \ast \Y$ the subcategory of $\mod R$ consisting of modules $E$ such that there exists an exact sequence $0 \to X \to E \to Y \to 0 $ with $X\in\X$ and $Y\in\Y$.
\begin{lem}\label{lem_syz}
Let $(R,\m,k)$ be a noetherian local ring and $M$ a finitely generated $R$-module.
Set $I=\soc R$ and $S=R/I$.
Then the following hold:
\begin{enumerate}[\rm(1)]
\item
One has $\Omega_{R}M\in (\add k)\ast \Omega_{S}(\mod S)$ in $\mod S$.
\item
One has $\Omega_{S}^{n}\Omega_{R}M\in\add_{S}\{S,(\add\Omega_{S}^{n}k)\ast(\Omega_{S}^{n+1}\mod S)\}$ for all $n\ge 0$.
\item
If $S$ is a regular local ring with $d=\dim S\ge 1$, then we have $\Omega_{S}^{d-1}\Omega_{R}M\in\add_{S}(S\oplus \Omega_{S}^{d-1}k)$.
\end{enumerate}
\end{lem}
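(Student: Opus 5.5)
The plan is to prove (1) directly from the definition of the socle, deduce (2) from (1) with the horseshoe lemma over $S$, and specialize (2) to $n=d-1$ for (3), using that $S$ has global dimension $d$. Throughout, $\Omega_R M$ is the syzygy in a minimal free resolution $0\to\Omega_R M\to F\to M\to 0$, so $\Omega_R M\subseteq \m F$.

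For (1), first note that $I=\soc R=(0:_R\m)$ satisfies $I\m=0$. Hence $I\cdot\Omega_R M\subseteq I\m F=0$, so $\Omega_R M$ is naturally an $S$-module. Likewise $IF$ is killed by $\m$, so it is a finite-dimensional $k$-vector space. Consider the exact sequence
\[
0\to \Omega_R M\cap IF\to \Omega_R M\to (\Omega_R M+IF)/IF\to 0 .
\]
The first term is a submodule of $IF$, hence a $k$-vector space and so lies in $\add k$. The third term is the kernel of the surjection $F/IF\cong S^{\oplus r}\to F/(\Omega_R M+IF)\cong M/IM$. Thus it is a (not necessarily minimal) first syzygy of the $S$-module $M/IM$, and so lies in $\Omega_S(\mod S)$. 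Since all three terms are $S$-modules, this gives $\Omega_R M\in(\add k)\ast\Omega_S(\mod S)$ in $\mod S$.

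For (2), take the sequence $0\to X\to \Omega_R M\to Y\to 0$ from (1), with $X\in\add k$ and $Y=\Omega_S N$ for some $N\in\mod S$, possibly up to a free summand. The case $n=0$ is (1), so let $n\ge1$. Apply the horseshoe lemma over $S$, using minimal free resolutions of $X$ and $Y$. This yields an exact sequence
\[
0\to\Omega_S^nX\to \Omega_S^n(\Omega_R M)\oplus S^{\oplus a}\to\Omega_S^nY\to0 ,
\]
where the middle term is the $n$-th syzygy in the possibly non-minimal horseshoe resolution. Here $\Omega_S^nX\in\add\Omega_S^nk$, and $\Omega_S^nY$ lies in $\Omega_S^{n+1}(\mod S)$. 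Therefore $\Omega^n_S\Omega_R M$ is a direct summand of an object of $(\add\Omega_S^nk)\ast(\Omega_S^{n+1}\mod S)$, which is the claim. The only point needing care is the bookkeeping of free summands created by non-minimal resolutions; this is why $S$ is included in the additive closure.

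For (3), apply (2) with $n=d-1\ge0$. Since $S$ is regular of dimension $d$, it has global dimension $d$. Hence every module in $\Omega_S^{d}(\mod S)$ is free, by the Auslander--Buchsbaum formula or simply because $\operatorname{pd}\le d$. So the relevant extensions have the form $0\to X'\to E\to S^{\oplus b}\to0$ with $X'\in\add\Omega_S^{d-1}k$, and they split. Thus $E\in\add_S(S\oplus\Omega_S^{d-1}k)$, and so is its direct summand $\Omega_S^{d-1}\Omega_R M$. I expect no real obstacle in any of the three steps; the most delicate point is (1), specifically identifying the quotient term as an $S$-syzygy and checking that everything is annihilated by $I$.
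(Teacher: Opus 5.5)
Your proposal is correct and follows the paper's proof essentially step for step. It uses the same socle-intersection exact sequence for (1) (with your $F$ in place of the paper's $R^{\oplus\mu(M)}$, and you spell out that the quotient is an $S$-syzygy of $M/IM$), takes $n$-th $S$-syzygies of that sequence via the horseshoe lemma for (2), and uses the splitting of extensions ending in $\Omega_S^d(\mod S)=\add S$ for (3). One minor aside: $\Omega_S^n\Omega_RM$ is already a direct summand of an object of the $\ast$-class in your argument, so the extra generator $S$ in the additive closure is harmless rather than essential.
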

\begin{proof}
(1) Since $\Omega_{R}M$ is a submodule of $\m^{\oplus \mu(M)}$, there exists an exact sequence $0\to \Omega_{R}M\cap I^{\oplus \mu(M)}\to \Omega_{R}M \to (\Omega_{R}M+I^{\oplus \mu(M)})/I^{\oplus \mu(M)}\to 0$.
Since $\Omega_{R}M\cap I^{\oplus \mu(M)}$ is annihilated by $\m$ and $(\Omega_{R}M+I^{\oplus \mu(M)})/I^{\oplus \mu(M)}$ is a submodule of a free $S$-module, the assertion follows.
(2) For $n\ge 0$, by taking $n$-th syzygies over $S$ in the exact sequence in (1), we obtain the conclusion.
(3) Since $\Omega_{S}^{d}(\mod S)=\add S$, every short exact sequence ending in an object of $\Omega_{S}^{d}(\mod S)$ in $\mod S$ splits.
Hence, by taking $d-1$ as $n$ in (2), the assertion follows.
\end{proof}
The following proposition gives a sufficient condition for a local ring to have finite $d$-syzygy representation type.
\begin{prop}\label{criterion}
Let $(R,\m,k)$ be a noetherian local ring with $d=\dim R>0$ and $I$ a nonzero ideal of $R$ contained in $\soc R$.
Assume that $t=\depth R/I>0$.
\begin{enumerate}[\rm(1)]
\item
One has $I=\soc R=\Gamma_{\m}R$.
\item
Let $x_{1},\ldots,x_{t}$ be elements in $\m$ such that $\overline{x_{1}},\ldots,\overline{x_{t}}$ forms an $R/I$-sequence.
Then we have $(x_{1},\ldots,x_{t})\cap I=(x_{1},\ldots,x_{t}) I=0$.
\item
Assume that $R/I$ is a regular local ring of dimension $d$, and $\overline{x_{1}},\ldots,\overline{x_{d}}$ is a regular system of parameters of $R/I$.
Then $\m=(x_{1},\ldots,x_{d})\oplus I$ and $R$ is of finite $d$-syzygy representation type.
\end{enumerate}
\end{prop}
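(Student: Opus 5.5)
Part (1) comes first. Since $I\subseteq\soc R$, we have $\m I=0$, so $I\subseteq\soc R\subseteq\Gamma_\m R$. For the reverse inclusion, note that $t>0$ gives an element $x\in\m$ that is regular on $R/I$. Take $a\in\Gamma_\m R$. Then $x^n a=0$ for some $n$, so $x^n\bar a=0$ in $R/I$, and regularity gives $\bar a=0$, i.e. $a\in I$. Hence $\Gamma_\m R\subseteq I$. Putting this together with the first chain of inclusions yields $I=\soc R=\Gamma_\m R$.

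For part (2), the equality $(x_1,\ldots,x_t)I=0$ is immediate from $\m I=0$. The real content is $(x_1,\ldots,x_t)\cap I=0$, which I will prove by induction on $t$, strengthening the claim to say that every linear relation $\sum_{i=1}^t r_i x_i\in I$ forces each $r_i x_i$ to vanish.

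Suppose $\sum r_i x_i=a\in I$. Reducing modulo $I$ gives a relation on the regular sequence $\bar x_1,\ldots,\bar x_t$ in $R/I$. By the standard Koszul-relation argument, $\bar r_t\in(\bar x_1,\ldots,\bar x_{t-1})$, so we can write $r_t=\sum_{i<t}s_ix_i+b$ with $b\in I$. Then $r_tx_t=\sum_{i<t}s_ix_ix_t+bx_t$, and $bx_t=0$ because $x_t\in\m$ kills $I$. Absorbing the terms $s_ix_ix_t$ into the first $t-1$ coefficients, we get $a=\sum_{i<t}r'_ix_i$, which reduces to the case $t-1$.

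The base case is the cleanest route: $x_1r\in I$ gives $\bar x_1\bar r=0$, hence $r\in I$ by regularity, hence $x_1r=0$. So the induction yields $a=0$.

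For part (3), first $\m=(x_1,\ldots,x_d)+I$, since the $\bar x_i$ generate $\m/I$. The intersection $(x_1,\ldots,x_d)\cap I$ vanishes by (2) applied with $t=d$; this is allowed because $\bar x_1,\ldots,\bar x_d$ is a regular system of parameters of the regular ring $R/I$, so $\depth R/I=d$. Hence $\m=(x_1,\ldots,x_d)\oplus I$.

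For finite $d$-syzygy type, by (1) we have $I=\soc R$ and $S=R/I$ is regular of dimension $d\ge1$, so Lemma \ref{lem_syz}(3) applies. It gives $\Omega_S^{d-1}\Omega_RM\in\add_S(S\oplus\Omega_S^{d-1}k)$ for every $M$. What remains is to compare $R$-syzygies with $S$-syzygies, so as to show $\Omega_R^dM\in\add_R(R\oplus G)$ for a fixed finitely generated module $G$.

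\textbf{This comparison is the main obstacle.} The key input is that the ideal $J=(x_1,\ldots,x_d)$ satisfies $J\cap I=0$, so $J\cong J/(J\cap I)$, which is the maximal ideal of $S$. I would try to prove that for an $R$-module $N$ killed by $I$ (an $S$-module) with $N\subseteq\m F$, we have $\Omega_RN\cong R^{a}\oplus(\text{$S$-module})$, and more precisely that $\Omega_RN$ is built from $\Omega_SN$ plus copies of $I$ or $k$. The argument I have in mind: the $R$-free cover $R^m\to N$ has kernel containing $I^m$, and its image modulo $I^m$ is $\Omega_SN$, giving an extension $0\to I^m\to\Omega_RN\to\Omega_SN\to0$. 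Because $J\cap I=0$, this extension should split, which I would check by lifting $S$-generators into $J$-components via the decomposition $\m=J\oplus I$.

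Iterating this starting from $\Omega_RM$ (an $S$-module up to $k$-summands, by Lemma \ref{lem_syz}(1)), I expect to get
\[
\Omega_R^dM\in\add_R\{R,\,k,\,\Omega_S^{d-1}k,\,\Omega_S^{j}k\ (0\le j\le d)\}.
\]
Since this is a finite list, $G$ can be taken to be the direct sum of these modules, which gives finite $d$-syzygy representation type.
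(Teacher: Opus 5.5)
Your proof is correct and follows the paper's structure. Part (1) is the same argument. In (3) you likewise use (2) to get $\m=J\oplus I$, then combine Lemma \ref{lem_syz}(3) with a comparison of $R$-syzygies and $S$-syzygies. The differences are in what you prove by hand instead of citing.

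\textbf{Part (2).} The paper obtains $(x_1,\ldots,x_t)\cap I=0$ from the vanishing of $\Tor_1^R(R/I,R/(x_1,\ldots,x_t))\cong((x_1,\ldots,x_t)\cap I)/(x_1,\ldots,x_t)I$. You instead run an elementary induction on $t$, which is fine. However, drop the announced ``strengthening'' that each $r_ix_i$ vanishes: it is false. In $R=k[\![u,v]\!]\ltimes k$ with $x_1=u$, $x_2=v$, the Koszul relation $x_2\cdot x_1+(-x_1)\cdot x_2=0$ has $x_1x_2\neq 0$. Your induction only uses, and only proves, $a=0$.

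\textbf{The syzygy comparison.} The paper simply cites \cite[Lemma 3.2]{NT2020} for
\[
\Omega_R^{d-1}(\Omega_RM)\cong\Omega_S^{d-1}\Omega_RM\oplus\bigoplus_{i=0}^{d-2}\Omega_S^{d-2-i}I^{\oplus},
\]
whereas you reprove it. Your splitting idea works, and it can be made precise in one line. Let $N$ be any $S$-module with minimal cover $R^{\oplus m}\to N$. Then
\[
I^{\oplus m}\subseteq\Omega_RN\subseteq\m^{\oplus m}=J^{\oplus m}\oplus I^{\oplus m},
\]
so by the modular law
\[
\Omega_RN=(\Omega_RN\cap J^{\oplus m})\oplus I^{\oplus m}\cong\Omega_SN\oplus I^{\oplus m}.
\]
A few points of your sketch can be tightened accordingly:
\begin{enumerate}[\rm(i)]
\item The hypothesis $N\subseteq\m F$ is unnecessary.
\item No free summand $R^a$ ever appears.
\item $\Omega_RM$ is genuinely an $S$-module, not merely one ``up to $k$-summands'', since $I\m=0$.
\end{enumerate}
Iterating this splitting and using $I\cong k^{\oplus r}$ gives exactly the cited formula. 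Your final list is adequate, since it contains $S\cong\Omega_S^dk$ for $S$ regular of dimension $d$.

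In sum, your version is self-contained at the cost of a few lines, while the paper's is shorter because it outsources both ingredients.
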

\begin{proof}
(1) Since $I\subseteq \soc R\subseteq \Gamma_{\m}R$ and $\Gamma_{\m}(R/I)=0$, one has $I=\Gamma_{\m}R$.
Hence, the above inclusions are equalities.
(2) Since $x_{1},\ldots,x_{t}$ is an $R/I$-sequence, we have $0=\Tor_{1}^{R}(R/I,R/(x_{1},\ldots,x_{t}))\cong I\cap (x_{1},\ldots,x_{t})/I(x_{1},\ldots,x_{t}) = I\cap (x_{1},\ldots,x_{t})$.
(3) Since $\overline{x_1},\ldots,\overline{x_d}$ is a regular system of parameters of $R/I$, we have $\m=(x_1,\ldots,x_d)+I$. 
By (2), $(x_1,\ldots,x_d)\cap I=0$, and hence $\m=(x_1,\ldots,x_d)\oplus I$.
Let $M$ be a finitely generated $R$-module.
Set $S=R/I$.
By \cite[Lemma 3.2]{NT2020}, we have $\Omega_{R}^{d-1}(\Omega_{R}M)\cong\Omega_{S}^{d-1}\Omega_{R}M \oplus \bigoplus_{i=0}^{d-2}\Omega_{S}^{d-2-i}I^{\oplus}$.
It follows from Lemma \ref{lem_syz}(3) that $\Omega_{S}^{d-1}\Omega_{R}M\in\add_{S}(S\oplus \Omega_{S}^{d-1}k)$.
Hence, we have $\Omega_{R}^{d}M \in \add_{S}(S\oplus \Omega_{S}^{d-1}k\oplus \bigoplus_{i=0}^{d-2}\Omega_{S}^{d-2-i}I)$.
Thus, one has $\Omega^{d}(\mod R)\subseteq \add_{R}(R\oplus G)$, where $G=S\oplus \Omega_{S}^{d-1}k\oplus \bigoplus_{i=0}^{d-2}\Omega_{S}^{d-2-i}I$.
\end{proof}
\begin{rem}
\begin{enumerate}[\rm(1)]
\item
In the setting of Proposition \ref{criterion}, the ring $R$ is not Cohen--Macaulay.
\item
In the situation of Proposition \ref{criterion}(3), for each $X\in\db(R)$, the exact triangle $IX\to X\to X/IX\to\quad$ in $\db(R)$ shows that $\db(R)={\langle k\oplus R/I\rangle}_{d+2}^{\db(R)}$.
Hence, together with Corollary \ref{dim}, we obtain $d\le \dim\db(R)\le d+1$.
\end{enumerate}
\end{rem}
We end this section by applying Proposition \ref{criterion} to give examples of rings of finite $d$-syzygy representation type.
\begin{ex}\label{triv}
Let $(S,\n,k)$ be a regular local ring with $d=\dim S>0$ and $n$ a positive integer.
Consider the idealization $R=S\ltimes k^{\oplus n} \cong S[\![x_{1},\ldots,x_{n}]\!]/(\n x_{i},x_{i}x_{j}\mid 1\le i,j\le n)$.
Set $I=(x_{1},\ldots,x_{n})R$.
Then we have $0\neq I\subseteq \soc R$ and $R/I\cong S$.
Hence, by Proposition \ref{criterion}, $R$ is of finite $d$-syzygy representation type.
Thus, if $d\in\{1,2\}$, then it follows from Proposition \ref{d-syz} that $\dim\db(R)=d$.
\end{ex}
\begin{ac}
The author would like to thank his supervisor Ryo Takahashi for giving many thoughtful questions and helpful discussions.
\end{ac}

\end{document}